\theoremstyle{definition}
\def\fnum{equation} 
\newtheorem{Thm}[\fnum]{Theorem}
\newtheorem{Cor}[\fnum]{Corollary}
\newtheorem{Lem}[\fnum]{Lemma}
\newtheorem{Pro}[\fnum]{Proposition}
\numberwithin{equation}{section}
\newcommand{\nn}{{\bf{n}}}
\newcommand{\dist}{{\text {dist}}}
\newcommand{\Hess}{{\text {Hess}}}
\def\RR{{\bold R}}
\def\SS{{\bold S}}
\newcommand{\cS}{{\mathcal{S}}}
\newcommand{\eqr}[1]{(\ref{#1})}
\title[Level set flow]{Regularity of the level set flow}
\author[]{Tobias Holck Colding}%
\address{MIT, Dept. of Math.\\
77 Massachusetts Avenue, Cambridge, MA 02139-4307.}
\author[]{William P. Minicozzi II}%
\thanks{The  authors
were partially supported by NSF Grants DMS 1404540 and DMS 1206827.   This material is based upon work supported by the NSF DMS 1440140, while T.H.C. was in residence at the Mathematical Science Research Institute (MSRI) in Berkeley, CA, during the Spring of 2016.}
\email{colding@math.mit.edu  and minicozz@math.mit.edu}
\begin{document}

\maketitle

\begin{abstract}
We showed earlier that the level set function of a monotonic advancing front is twice differentiable everywhere with bounded second derivative.  We show here that  the second derivative is continuous if and only if  the flow has a single singular time where it becomes extinct and the singular set consists of a closed $C^1$ manifold with cylindrical singularities.

\end{abstract}

\section{Introduction}

 The level set method has been used with great success the last thirty years in both pure and applied mathematics to describe evolutions of various physical situations. In mean curvature flow, the evolving hypersurface (front) is thought of as the level set of a function that satisfies a nonlinear degenerate parabolic equation. Solutions are defined weakly  in the viscosity sense; in general, they may not even be differentiable (let alone twice differentiable).

For a monotonically advancing front, we showed in \cite{CM5} that viscosity solutions are in fact twice differentiable and satisfy the equation in the classical sense. Here we characterize when they are $C^2$.  As we will see, the situation becomes very rigid when the second derivative is continuous.

When $v: \RR^{n+1} \times \RR \to \RR$ is a function and for each $s$ the level set $t \to \{ x \, | \, v(x,t) = s \}$ evolves by the mean curvature flow, then $v$ satisfies the level set equation
 \begin{align}  \label{e:levelsetflow}
\partial_t  v=|\nabla v|\,\text{div}\left(\frac{\nabla v}{|\nabla v|}\right)\, .
\end{align}
This equation has been studied extensively.  Whereas the work of Osher and Sethian, \cite{OsSe}, was numerical, Evans and Spruck, \cite{ES}, and, independently, Chen, Giga, and Goto, \cite{ChGG} provided the theoretical justification. This is analytically subtle, principally because the mean curvature evolution equation is nonlinear, degenerate, and indeed  defined only weakly at points where $\nabla v = 0$. Moreover, $v$ is a priori not even differentiable, let alone twice differentiable. They resolved these problems by introducing an appropriate definition of a weak solution, inspired by the notion of viscosity solutions, and showed existence and uniqueness.

When the initial hypersurface is mean convex (the mean curvature is non-negative), so are all future ones and the front advances monotonically.  In this case, Evans and Spruck, \cite{ES}, showed that $v(x, t) = u(x) - t$, where $u$ is Lipschitz and satisfies (in the viscosity sense)
\begin{align}	\label{e:arrivalu}
	-1 = |\nabla u|\,\text{div}\left( \frac{\nabla u}{|\nabla u|}\right)\,  .
\end{align}
As the front moves monotonically inwards, it sweeps out the entire domain inside the initial hypersurface.  The function $u$ is the {\emph{arrival time}} since $u(x)$ is the time when the front passes through $x$.
It is defined on the entire compact domain bounded by the initial hypersurface.
Singular points for the flow correspond to critical points for $u$: the flow has a singularity at $x$ at time $u(x)$ if and only if $\nabla u (x) = 0$.

 When  the initial hypersurface is convex,   the flow is smooth except at the point it becomes extinct and Huisken showed that the arrival time is $C^2$, \cite{H1}, \cite{H2}.  In \cite{I1}, \cite{I2},
 Ilmanen gave  an example of a rotationally symmetric mean convex dumbbell in $\RR^3$ for which the arrival time was not $C^2$.  
 There is even more regularity in the plane, where Kohn and Serfaty showed that it is at least $C^3$, \cite{KS}. For $n > 1$, Sesum, \cite{S},  showed that Huisken's result is optimal; namely, she 
 gave examples of convex initial hypersurfaces where the arrival time is not three times differentiable.

 \vskip2mm
 In the next two theorems and   corollary,   $u$ is the arrival time of a mean convex flow in $\RR^{n+1}$ starting from a smooth closed connected  hypersurface.

 \begin{Thm}	\label{t:mcvxRn}
  $u$ is $C^2$ if and only if  both (1) and (2) hold:
  \begin{enumerate}
  \item There is exactly one singular time $T$ (where the flow becomes extinct).
  \item  The singular set $\cS$ is  
a $k$-dimensional closed connected embedded $C^1$ submanifold of  singularities where the blowup is a cylinder $\SS^{n-k} \times \RR^k$ at each point.  
\end{enumerate}
Moreover, $\cS$ is tangent to the $\RR^k$ factor in (2).
\end{Thm}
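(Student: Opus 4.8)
\emph{Strategy.} The plan is to prove the two implications and the ``moreover'' clause, using throughout: (i) the results of \cite{CM5}, namely that $u$ is twice differentiable on $\overline{\Omega}$ with bounded Hessian, is smooth away from the singular set $\cS=\{\nabla u=0\}$, and solves \eqref{e:arrivalu} classically where $\nabla u\neq0$, which rewrites as $\operatorname{tr}(\nabla^2u|_{(\nabla u)^{\perp}})=-1$ there; and (ii) the structure theory for mean convex mean curvature flow: there are finitely many singular times, the blowup of the flow at any point of $\cS$ is a round shrinking generalized cylinder $\SS^{n-k}\times\RR^{k}$ with $0\le k\le n-1$, and such blowups are unique. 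Abbreviate $A=-\nabla^2u$ and $V=V(x_0)=\ker\nabla^2u(x_0)$ for $x_0\in\cS$.

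\emph{From $u\in C^{2}$ to (1), (2) and the ``moreover'' clause.} Fix $x_0\in\cS$. For $x=x_0+tw+o(t)$ with $0\neq w\in\operatorname{Image}A(x_0)$ one has $\nabla u(x)=-tA(x_0)w+o(t)\neq0$ for small $t\neq0$ and $\nabla u(x)/|\nabla u(x)|\to\pm w/|w|$; inserting this into $\operatorname{tr}(A|_{(\nabla u)^{\perp}})=1$ and using continuity of $\nabla^{2}u$ (here is where $C^{2}$ enters) gives $\langle A(x_0)w,w\rangle=\operatorname{tr}A(x_0)-1$ for every unit $w\in\operatorname{Image}A(x_0)$, whence $A(x_0)=\tfrac1{n-k}\Pi_{V^{\perp}}$ with $k=\dim V$ and $\dim V^{\perp}=n-k+1$; so $\nabla^{2}u(x_0)$ has spectrum $\{(-\tfrac1{n-k})^{\,n-k+1},0^{\,k}\}$, $k$ being the dimension of the cylindrical blowup and $V$ its axis. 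Since these spectra are pairwise isolated (the multiplicity of the zero eigenvalue is $k$, and the nonzero eigenvalue $-\tfrac1{n-k}$ determines $k$), continuity of $\nabla^{2}u$ forces $k$ to be locally constant on $\cS$. Applying the implicit function theorem to the $C^{1}$ map $\nabla u$, whose differential $\nabla^{2}u(x_0)$ has rank $n-k+1$ (and likewise at nearby points of $\cS$), realizes $\cS$ near $x_0$ inside a $C^{1}$ submanifold $\Sigma^{k}$ with $T_{x}\Sigma=\ker\nabla^{2}u(x)$ at points $x\in\cS\cap\Sigma$; feeding the classical equation and the normal form back in then upgrades this to $\cS=\Sigma$ near $x_0$ with $u$ locally constant on $\cS$ (ruling out that $u$ has a degenerate critical direction along the axis — precisely the behaviour that fails for Ilmanen's dumbbell). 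Consequently, in a tubular neighbourhood of $\cS$ one has $u(x)=u(x_0)-\tfrac1{2(n-k)}\dist(x,\cS)^{2}+o(\dist(x,\cS)^{2})$, so $u\le u(x_0)$ there with equality only on $\cS$; hence each component $\cS'$ of $\cS$, on which $u\equiv T'$, is a compact embedded $C^{1}$ $k$-manifold near which $\{u>T'\}$ is empty and $\{u\ge T'-\varepsilon\}$ is a thin solid tube about $\cS'$ separated from the rest of $\{u\ge T'-\varepsilon\}$ by an annulus on which $u<T'-\varepsilon$. It follows that no disconnection of $\{u>t\}$ occurs at any singular time, so $\{u>t\}$ is connected for every $t$: it is connected below the first singular time, and topology changes occur only at singular times, where $\{u>t\}$ can merely shed tube-shaped components. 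If some component of $\cS$ had value $T'<T:=\max u$, then for a regular value $t$ slightly below $T'$ the set $\{u\ge t\}=\overline{\{u>t\}}$ would contain both the tube about $\cS'$ and points near $\{u=T\}$, hence be disconnected — impossible. So $T=\max u$ is the only singular time (the flow becomes extinct there), $\cS=\{u=T\}$ is connected (a nested intersection of connected sets), and $\cS$ is a single closed connected embedded $C^{1}$ $k$-manifold with cylindrical blowups tangent to $V$; this gives (1), (2) and the ``moreover'' clause.

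\emph{From (1) and (2) to $u\in C^{2}$.} Away from $\cS$ the flow is a smooth mean curvature flow, so $\nabla^{2}u$ is smooth there; it remains to prove $\nabla^{2}u(x)\to\nabla^{2}u(x_0)$ as $x\to x_0\in\cS$. By (2) the blowup at $(x_0,T)$ is a cylinder $\SS^{n-k}\times\RR^{k}$, and since $\cS$ is $C^{1}$ it must be tangent to the axis $\RR^{k}$ (the singular set is squeezed into a thin cone about $x_0+\text{axis}$). Using uniqueness of this blowup and the associated (\L ojasiewicz-type) rate of convergence, the rescalings of the flow about $(x_0,T)$ are $C^{2}$-close to the standard shrinking cylinder at every small scale; equivalently, $u$ near $x_0$ is $C^{2}$-close to the cylinder arrival time $T-\tfrac1{2(n-k)}\dist(\cdot,\cS)^{2}$, whose Hessian is the constant tensor $-\tfrac1{n-k}\Pi_{(\text{axis})^{\perp}}$. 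As $\cS\in C^{1}$, the axis planes, hence this model Hessian, vary continuously, so $\nabla^{2}u$ extends continuously across $\cS$ with $\nabla^{2}u(x_0)=-\tfrac1{n-k}\Pi_{(T_{x_0}\cS)^{\perp}}$, both along $\cS$ (immediate from $\cS\in C^{1}$) and off $\cS$ (from the cylinder approximation); thus $u\in C^{2}$.

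\emph{Main obstacle.} The crux is the rigidity step in the first implication: upgrading ``$\cS$ is contained in a $C^{1}$ submanifold'' to ``$\cS$ \emph{is} that submanifold and $u$ is constant on it,'' i.e.\ excluding Hessian-degenerate critical directions of $u$ along the putative axis — precisely what occurs for Ilmanen's non-$C^{2}$ dumbbell; I expect this to require careful interplay of the classical equation with the cylindrical asymptotics of the flow near $\cS$. A secondary technical point is the quantitative ``$C^{2}$-closeness at all small scales'' in the second implication, which rests on the rate-of-convergence estimates at cylindrical singularities.
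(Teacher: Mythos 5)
The forward direction has a genuine gap, and you have in effect flagged it yourself: the step ``feeding the classical equation and the normal form back in then upgrades this to $\cS=\Sigma$ near $x_0$ with $u$ locally constant on $\cS$'' is not an argument but a placeholder, and it is precisely the heart of the paper's proof (Proposition \ref{p:key}). Pointwise twice differentiability plus the Hessian normal form do not by themselves exclude the scenario in which $\cS$ is a proper closed subset of the $C^1$ graph over the kernel $K$ and $u$ varies along it (this is the degenerate axial behaviour you mention in connection with Ilmanen's dumbbell). The paper closes this by two lemmas whose interplay you would need to reproduce: Lemma \ref{l:Piz}, which uses uniqueness of tangent flows \cite{CM2} to see that near a singular point the level sets restricted to an affine plane $p+K^{\perp}$ are approximate spheres, so an interior maximum of $u$ on that plane yields a point $q$ with $\Pi(\nabla u(q))=0$; and Lemma \ref{l:normals}, which uses the equation \eqr{e:arrivalu} together with continuity of $\Hess_u$ to force $\Pi_{\text{axis}}(\nn(q_j))\to 0$ for regular points $q_j$ approaching the singularity. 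If some plane $p_j+K^{\perp}$ missed $\cS$, the point $q_j$ from Lemma \ref{l:Piz} would be regular with $\Pi(\nn(q_j))=0$, contradicting Lemma \ref{l:normals}; this is what shows $\cS$ fills out the whole graph, and constancy of $u$ on it then comes from \cite{CM4}. Without this your subsequent Taylor expansion $u(x)=u(x_0)-\tfrac{1}{2(n-k)}\dist(x,\cS)^2+o(\dist(x,\cS)^2)$ (which needs $\nabla u\equiv 0$ and $u\equiv$ const on the full submanifold, plus uniform continuity of $\Hess_u$) has nothing to stand on, and the connectedness/extinction bookkeeping collapses. I will note, to your credit, that granted Proposition \ref{p:key} your local-extinction argument via the uniform Taylor expansion and connectedness of $\{u>t\}$ below the first singular time is a clean alternative to the paper's Lemma \ref{l:sepa} and Corollary \ref{c:extinct}, provided you run the disconnection argument at the first singular value (where connectedness of the super-level sets is actually known) rather than at an arbitrary component.

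In the converse direction your outline is essentially the paper's, but the decisive quantitative step is asserted rather than proved. Uniqueness of the blowup at $(x_0,T)$, even with a \L{}ojasiewicz rate, only controls the rescaled flow in regions transverse to the axis; it does not directly give that $\Hess_u(q)$ is close to $-\tfrac{1}{n-k}\Pi_{(T_{p}\cS)^{\perp}}$ for points $q$ whose nearest singular point $p$ is far (relative to $\dist(q,\cS)$) from $x_0$ along the axis. The paper handles this by first proving $C^2$ regularity in cones transverse to the kernel (Theorem \ref{t:C2trans}, via the convergence of $A/H$, $|\nabla H|/H^2$, $|\Delta H|/H^3$ on the rescaled level sets), and then, for an arbitrary sequence $q_j\to p\in\cS$, passing through the nearest-point projection $p_j\in\cS$, using $q_j-p_j\perp T_{p_j}\cS$ to apply Theorem \ref{t:C2trans} at $p_j$ and the $C^1$ regularity of $\cS$ to compare $\Hess_u(p_j)$ with $\Hess_u(p)$; uniformity of the cylindrical scale along the compact $\cS$ also enters. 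Your phrase ``$u$ near $x_0$ is $C^2$-close to $T-\tfrac{1}{2(n-k)}\dist(\cdot,\cS)^2$'' is exactly the conclusion of this two-step argument, so as written it begs the question: you should either prove the transverse statement and the nearest-point reduction, or cite them explicitly.
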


In general, even if $u$ is not $C^2$, it follows from \cite{CM4} that $\cS$ is contained in a union of $C^1$ submanifolds  with each submanifold   tangent to the axis of the corresponding cylinder at each singular point.{\footnote{The main theorem of \cite{CM4} states the submanifolds are Lipschitz, but the proof shows that they are $C^1$.}}  There are finitely many $(n-1)$-dimensional submanifolds and at most countably many in each lower dimension.  Theorem \ref{t:mcvxRn} gives a much stronger statement when $u$ is $C^2$: there is only one submanifold, it is closed connected and embedded, it lies in one singular time, and $\cS$ fills out the entire submanifold (rather than being a subset of it).

A convex MCF  
gives an example where $u$ is $C^2$ and $\cS$ is a point (i.e., $k=0$), while the marriage ring{\footnote{The marriage ring is a thin mean convex torus
of revolution in $\RR^3$ where the MCF is smooth until   it  becomes extinct along a circle.}}
 gives an example where $u$ is $C^2$ and $\cS$ is a circle of cylindrical singularities.
In contrast, any of the examples of rotationally symmetric surfaces studied in \cite{AAG} has isolated cylindrical singular  points and, thus, is not $C^2$.  

We can restate the theorem in terms of the function $u$ as follows:

  \begin{Cor}	\label{c:mcvxRn}
  $u$ is $C^2$  if and only if  both (1) and (2) hold:
  \begin{enumerate}
  \item There is exactly one critical value $T= \max u$.
  \item  The critical set $\cS$ is    
a $k$-dimensional closed connected embedded $C^1$ submanifold.  At each critical point,      $\Hess_u$ has a $k$-dimensional kernel tangent to the critical set and is $- \frac{1}{n-k}$ times the identity on the orthogonal complement.
\end{enumerate}
\end{Cor}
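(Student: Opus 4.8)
The plan is to deduce Corollary~\ref{c:mcvxRn} from Theorem~\ref{t:mcvxRn} by translating each statement about the flow into the corresponding statement about the arrival time $u$; the only point requiring more than bookkeeping is the passage between cylindrical blowups and the Hessian of $u$ at a critical point, which I would handle using \cite{CM5}. Recall from the introduction that the flow is singular at $x$ exactly at the time $u(x)$, and that this happens if and only if $\nabla u(x) = 0$. Thus the singular set $\cS$ of the flow is the critical set of $u$, and a number is a singular time of the flow precisely when it is a critical value of $u$. Since $u$ is the arrival time on the whole compact region bounded by the initial hypersurface --- with $u$ vanishing on the boundary and positive in the interior --- $u$ attains its maximum at an interior critical point, and the flow becomes extinct exactly at time $\max u$. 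Because extinction always produces a singularity, ``there is exactly one singular time and it is the extinction time'' is the same assertion as ``there is exactly one critical value and it equals $\max u$''. This matches condition~(1) of the theorem with condition~(1) of the corollary.

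For condition~(2), the submanifold part --- $\cS$ is a $k$-dimensional closed connected embedded $C^1$ submanifold --- is word for word the same, so the only point to verify is: at a critical point $x_0$, with $T := u(x_0)$, the blowup of the flow at $(x_0, T)$ being the cylinder $\SS^{n-k}\times\RR^k$ is equivalent to $\Hess_u(x_0)$ having a $k$-dimensional kernel tangent to $\cS$ (this last being the ``moreover'' clause of the theorem) and equal to $-\tfrac{1}{n-k}$ times the identity on the orthogonal complement. Here I would invoke \cite{CM5}, by which $u$ is twice differentiable everywhere with bounded Hessian, so that $u(x) = T + \tfrac12 \Hess_u(x_0)(x - x_0, x - x_0) + o(|x - x_0|^2)$. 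Rescaling parabolically about $(x_0, T)$, i.e.\ looking at $r^{-1}\big(\{u = T + r^2 \bar t\} - x_0\big)$ as $r \to 0$, the $o(|x - x_0|^2)$ term washes out and one is left with the level set $\{\xi : \tfrac12 \Hess_u(x_0)(\xi, \xi) = \bar t\}$ of the quadratic form. Writing $\RR^{n+1} = V \oplus V^\perp$ with $V^\perp = \Ker \Hess_u(x_0)$ and with $\Hess_u(x_0)$ negative definite on $V$, this limit is a shrinking, a priori ellipsoidal, cylinder over $V^\perp$; by the structure theory for mean convex mean curvature flow the blowup is in fact a round shrinking cylinder, which forces $\Hess_u(x_0)$ to be $-\lambda\,\mathrm{Id}$ on $V$ for some $\lambda > 0$, and comparison with the self-shrinker radius $\sqrt{2(n-k)(T - s)}$ of $\SS^{n-k}(\sqrt{2(n-k)(T-s)}) \times \RR^k$ then pins down $\lambda = \tfrac{1}{n-k}$ and $\dim V^\perp = k$. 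The converse is this computation read backwards. With this equivalence in hand, condition~(2) of the corollary is exactly condition~(2) of the theorem together with its ``moreover'' clause, and the corollary follows.

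The only step that is not bookkeeping is the last one --- establishing, in both directions, that twice-differentiability of $u$ lets one pass between the parabolic blowup of the flow and the quadratic Taylor expansion of $u$ at a critical point, and that roundness of the blowup is equivalent to the stated normal form of $\Hess_u(x_0)$. I expect this to be available from the analysis of the arrival time in \cite{CM5} together with the classification of tangent flows for mean convex mean curvature flow; making that passage fully rigorous is the main thing to be careful about. Everything else is the dictionary above, so once this link is secured Corollary~\ref{c:mcvxRn} is immediate from Theorem~\ref{t:mcvxRn}.
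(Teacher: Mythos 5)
Your proposal is correct and is essentially the paper's (implicit) treatment: the corollary is just a restatement of Theorem \ref{t:mcvxRn} via the dictionary singular points $=$ critical points of $u$, singular/extinction time $=$ critical value $=\max u$, together with the identity \eqr{e:1p2} from \cite{CM5} that equates an $\SS^{n-k}\times\RR^k$ blowup with the stated normal form of $\Hess_u$ (the converse direction needing only that all mean convex singularities are cylindrical, so the kernel dimension of $\Hess_u$ pins down $k$, and the ``moreover'' clause giving tangency of $\cS$ to the kernel). Your parabolic-rescaling Taylor argument is an unneeded re-derivation of what can simply be cited --- and as written its ``the $o(|x-x_0|^2)$ term washes out'' step would require justifying convergence of the rescaled level sets, which pointwise twice differentiability alone does not give --- but since you also defer to \cite{CM5} for this, it is not a genuine gap.
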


 \vskip2mm
 The Hessian is always continuous where the flow is smooth.  Thus,  discontinuity of $\Hess_u$ only occurs at  critical points of $u$.  
The next proposition shows that $\Hess_u$ is still continuous at a critical point if we approach it transversely to the kernel $K$ of $\Hess_u$ at the critical point:  $u$ is $C^2$ where the projection $\Pi_{\text{axis}}$ onto $K$ is bounded by the projection $\Pi $ onto $K^{\perp}$.

 \begin{Thm}	\label{t:C2trans}
Suppose that $\nabla u (0) = 0$.
  Given any $C$, there exists $\delta > 0$ so that   $u$ is $C^2$ in the region
\begin{align}	\label{e:region}
 	B_{\delta} \cap \{ x \, | \, \left| \Pi_{\text{axis}} (x) \right|  \leq C  \, \left| \Pi (x) \right|   \} \, .
\end{align}
 \end{Thm}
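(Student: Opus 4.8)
The plan is to argue by contradiction using a blowup/compactness argument, exploiting the known structure of tangent flows at a cylindrical singularity together with the second differentiability from \cite{CM5}. Suppose the conclusion fails for some $C>0$: then there is a sequence $x_i \to 0$ with $|\Pi_{\text{axis}}(x_i)| \le C\,|\Pi(x_i)|$ along which $\Hess_u$ does not converge to the value forced at $0$ (by \cite{CM5}, $u$ is twice differentiable everywhere, so $\Hess_u(0)$ exists; near a cylindrical singularity it is $-\frac{1}{n-k}$ times the identity on $K^\perp$ and $0$ on the axis $K$). Set $r_i = |x_i| \to 0$ and consider the parabolic rescalings of the flow centered at the singular point $0$ at its singular time. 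The crux is that the rescaled flows converge (after passing to a subsequence) to the shrinking cylinder $\SS^{n-k}\times\RR^k$, and on the cylinder the arrival-time function is an explicit quadratic: $u_{\text{cyl}}(x) = T - \frac{1}{2(n-k)}|\Pi(x)|^2$, which has $\Hess$ exactly equal to the claimed limiting value. So the desired $C^2$-estimate is really a statement that this convergence is strong enough — in $C^2$ on the relevant region — and uniform in the cone $\{|\Pi_{\text{axis}}| \le C|\Pi|\}$, where one stays a definite distance (comparable to the scale) away from the axis and hence away from the region where the cylinder degenerates.

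The key steps, in order, are: (1) Normalize so that $0$ is the unique cylindrical singular point under consideration and recall from the structure theory (\cite{CM4} and the references on mean-convex flow) that the tangent flow at $0$ is the multiplicity-one shrinking cylinder $\SS^{n-k}\times\RR^k$, and that this holds with the axis being exactly the $\RR^k$ factor along which the singular set $\cS$ is tangent. (2) For a point $x$ with $|x|=r$ small and $|\Pi_{\text{axis}}(x)| \le C|\Pi(x)|$, note $|\Pi(x)| \ge \frac{r}{\sqrt{1+C^2}}$, so $x$ lies in a region where, at scale $r$, the rescaled flow is uniformly close to the cylinder and the cylinder's hypersurface is smooth with curvature bounded above and below in terms of $C$ and $n-k$. (3) Invoke the local regularity for mean curvature flow (Brakke/White-type $\epsilon$-regularity, or the curvature estimates for mean-convex flow of White/Huisken–Sinestrari) to upgrade the $C^0$ closeness of the rescaled flow to the cylinder to $C^\infty$ closeness on a parabolic neighborhood of the relevant scale; in particular the second fundamental form of the level sets near $x$ is $C^0$-close, after rescaling, to that of the cylinder. (4) Translate this back into a statement about $\Hess_u$: since $u(x) = T - (\text{arrival time to origin at scale }r)$ and the arrival-time Hessian is computed from the geometry of the level sets (using $-1 = |\nabla u|\,\dv(\nabla u/|\nabla u|)$ and the identity, from \cite{CM5}, relating $\Hess_u$ to the second fundamental form and the Hessian in the normal direction), conclude $\Hess_u(x) \to \Hess_{u_{\text{cyl}}} = \Hess_u(0)$ uniformly over the cone, contradicting the choice of the $x_i$. (5) Choosing $\delta$ small enough that all of this closeness holds in $B_\delta$ intersected with the cone gives the stated $\delta$.

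The main obstacle I anticipate is Step (3)–(4): making the passage from ``the rescaled flows converge to the cylinder'' to ``$\Hess_u$ converges'' genuinely uniform over the whole truncated cone down to scale $0$, rather than just at a fixed scale. The subtlety is that the convergence of tangent flows is a priori only along sequences of scales, so one must use the uniqueness of the cylindrical tangent flow (which is known in the mean-convex setting, via the work on uniqueness of cylindrical tangent flows and the structure of the singular set) to get a rate, or at least a ``no-gap'' statement, so that for \emph{every} small scale the rescaled flow is close to \emph{the same} cylinder. Once the tangent flow is unique and the approach direction is controlled away from the axis, the $\epsilon$-regularity gives uniform smooth estimates, and the remaining computation relating level-set geometry to $\Hess_u$ is the kind of routine identity already developed in \cite{CM5}. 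A secondary technical point is ensuring that ``$C^2$ in the region \eqref{e:region}'' is interpreted correctly: $u$ is already twice differentiable everywhere, so what must be shown is continuity of $\Hess_u$ on that region, which the uniform convergence argument delivers directly.
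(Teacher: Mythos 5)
Your proposal is correct and follows essentially the same route as the paper: the paper also reduces to showing $\Hess_u(q_j)\to\Hess_u(0)$ along sequences in the cone (using \cite{CM4} to see the cone meets $\cS$ only at $0$, and \cite{CM5} for twice differentiability), invokes the uniqueness of the cylindrical blowup from \cite{CM2} --- exactly the point you flagged as the crux --- to get smooth convergence of the rescaled level sets to a fixed cylinder away from the axis, and then concludes via the identities expressing $\Hess_u$ through $A/H$, $|\nabla H|/H^2$ and $|\Delta H|/H^3$. Your contradiction/compactness framing and explicit appeal to $\epsilon$-regularity are only cosmetic differences from the paper's direct argument.
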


Thus, any  lack of continuity only occurs along paths   tangent to the kernel of $\Hess_u$.

 \section{ $C^2$ arrival times}
 

In this section, we will prove one direction of the main theorem: If the arrival time is $C^2$, then the flow has the one singular time and the singular set is a closed connected embedded $C^1$ submanifold.

Throughout this section,    $u$ is the arrival time of a mean convex flow in $\RR^{n+1}$ starting from a smooth closed connected  hypersurface.
 
\subsection{The stratification of $\cS$}
When the initial hypersurface is mean convex, then all singularities are cylindrical; see, \cite{W1}, \cite{W2},   \cite{H1}, \cite{HS1}, \cite{HS2}, \cite{HaK}, \cite{An}; cf. \cite{B}, \cite{CM1}.

The singular set $\cS$ is stratified into subsets 
\begin{align}
	\cS_0 \subset \cS_1 \subset \dots \subset \cS_{n-1} = \cS \, , 
\end{align}
where $\cS_k$ consists of all singularities  where the tangent flow splits off a Euclidean factor of dimension {\underline{at most}}
$k$.  In particular, $\cS_k \setminus \cS_{k-1}$ is the set where the blow up is $\RR^k \times \SS^{n-k}$.  
By \cite{CM5}, the Hessian has a  special form at a critical point.  Namely,  if $p \in \cS_k \setminus \cS_{k-1}$, then 
\begin{align}	\label{e:1p2}
	\Hess_u (p) = - \frac{1}{n-k} \, \Pi \, , 
\end{align}
 where $\Pi$ is orthogonal projection onto the orthogonal complement of the $\RR^k$ factor.
If $k\geq 1$, let $\Pi_{\text{axis}}$ denote orthogonal projection onto the $k$-plane tangent to the ``axis''.

It follows from upper semi-continuity of the density that the top strata $\cS \setminus \cS_{n-2}$ is compact.  A priori, it is possible that a sequence of points in one of the lower strata might converge to a point in a higher strata.  However, by  \eqr{e:1p2}, this is impossible when the arrival time is $C^2$:

\begin{Lem}	\label{l:strata}
  If $u$ is $C^2$, then each strata $\cS_k \setminus \cS_{k-1}$ is compact.

\end{Lem}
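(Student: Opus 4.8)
The plan is to deduce this from the Hessian formula \eqr{e:1p2} together with the hypothesis that $\Hess_u$ is continuous. First note that $\cS$ itself is compact: since $u$ is twice differentiable with bounded second derivative, $\nabla u$ is Lipschitz, hence continuous, so $\cS = \{\nabla u = 0\}$ is a closed subset of the compact domain bounded by the initial hypersurface. Consequently it suffices to show that each stratum $\cS_k \setminus \cS_{k-1}$ is closed, since any subset of the compact set $\cS$ that is closed is itself compact.

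To prove closedness, suppose $p_i \in \cS_k \setminus \cS_{k-1}$ and $p_i \to p$. Because $\cS$ is closed, $p \in \cS$, so $p \in \cS_m \setminus \cS_{m-1}$ for a unique $m \in \{0,1,\dots,n-1\}$, and the task is to show $m = k$. By \eqr{e:1p2}, $\Hess_u(p_i) = -\frac{1}{n-k}\, \Pi_i$, where $\Pi_i$ is the orthogonal projection onto the $(n{+}1{-}k)$-dimensional orthogonal complement of the axis at $p_i$; in particular every nonzero eigenvalue of $\Hess_u(p_i)$ equals $-\frac{1}{n-k}$, so $\|\Hess_u(p_i)\| = \frac{1}{n-k}$ (equivalently $\Tr \Hess_u(p_i) = -\frac{n-k+1}{n-k}$, or $\rank \Hess_u(p_i) = n{+}1{-}k$). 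Likewise $\|\Hess_u(p)\| = \frac{1}{n-m}$. Since $u$ is $C^2$, the map $x \mapsto \Hess_u(x)$ is continuous, hence so is $x \mapsto \|\Hess_u(x)\|$, and therefore $\frac{1}{n-k} = \lim_i \|\Hess_u(p_i)\| = \|\Hess_u(p)\| = \frac{1}{n-m}$. As $j \mapsto \frac{1}{n-j}$ is injective on $\{0,\dots,n-1\}$, this forces $m = k$, i.e. $p \in \cS_k \setminus \cS_{k-1}$, completing the proof.

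There is no serious obstacle here; the entire content is \eqr{e:1p2} plus continuity of $\Hess_u$. The one point worth care is that this computation is exactly what rules out the a priori scenario flagged before the lemma: a numerical invariant extracted from $\Hess_u$ — its operator norm, or its trace, or (combined with lower semicontinuity of rank) its rank — detects which stratum a critical point lies in, so continuity of $\Hess_u$ forces that invariant, and hence the stratum, to be locally constant along $\cS$; and the argument is symmetric, so it equally excludes a limit landing in a higher stratum or in a lower one.
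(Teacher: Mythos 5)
Your proof is correct and is essentially the argument the paper intends: the lemma's proof is left implicit in the preceding remark, which says precisely that strata cannot accumulate onto one another because \eqref{e:1p2} together with continuity of $\Hess_u$ pins down the stratum (your norm/trace/rank invariant makes this explicit), combined with closedness of $\cS=\{\nabla u=0\}$ in the compact domain. No gaps; your symmetric treatment even subsumes the paper's separate appeal to upper semicontinuity of the density for the top stratum.
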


\begin{Lem}	\label{l:normals}
If $u$ is $C^2$ at $p \in \cS_k \setminus \cS_{k-1}$ with $k\geq 1$ and $q_j$ is a sequence of regular points converging to $p$, then
\begin{align}
	\Pi_{\text{axis}} (\nn (q_j)) \to 0 \, .
\end{align}
\end{Lem}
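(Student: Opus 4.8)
The plan is to use the $C^2$ hypothesis together with the special form \eqref{e:1p2} of the Hessian at $p$ to control the normal direction $\nn(q_j)$ of the regular hypersurface near $p$. Since $q_j$ is a regular point, $\nabla u(q_j) \neq 0$ and $\nn(q_j) = \nabla u(q_j)/|\nabla u(q_j)|$ is the unit normal to the front passing through $q_j$. First I would observe that $u$ being $C^2$ means $\nabla u$ is $C^1$, so $\nabla u(q_j) = \nabla u(p) + \Hess_u(p)(q_j - p) + o(|q_j - p|) = \Hess_u(p)(q_j - p) + o(|q_j - p|)$, using $\nabla u(p) = 0$. By \eqref{e:1p2}, $\Hess_u(p) = -\frac{1}{n-k}\Pi$, where $\Pi$ is the orthogonal projection onto the orthogonal complement of the $\RR^k$ axis factor. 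Hence
\begin{align}
	\nabla u(q_j) = - \frac{1}{n-k}\, \Pi(q_j - p) + o(|q_j - p|) \, .
\end{align}
Applying $\Pi_{\text{axis}}$, which annihilates the range of $\Pi$, gives $\Pi_{\text{axis}}(\nabla u(q_j)) = o(|q_j - p|)$.

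The next step is to divide by $|\nabla u(q_j)|$ to pass to the unit normal. The issue here is that $|\nabla u(q_j)|$ could a priori go to zero faster than $|q_j - p|$, so I need a lower bound $|\nabla u(q_j)| \geq c\, |q_j - p|$ for the quotient to be controlled; equivalently I need to know that $q_j$ stays transverse to the kernel of $\Hess_u(p)$. This is exactly the content of Theorem \ref{t:C2trans}: if $q_j$ were within the transversality region $|\Pi_{\text{axis}}(q_j)| \leq C|\Pi(q_j)|$ for a fixed $C$, then $u$ is $C^2$ there and we get $|\nabla u(q_j)| = \frac{1}{n-k}|\Pi(q_j-p)| + o(|q_j-p|) \geq c\,|q_j-p|$, whence $\Pi_{\text{axis}}(\nn(q_j)) = \Pi_{\text{axis}}(\nabla u(q_j))/|\nabla u(q_j)| \to 0$. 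So the real work is to rule out the complementary possibility that $q_j$ escapes toward the axis, i.e. $|\Pi(q_j - p)| = o(|\Pi_{\text{axis}}(q_j - p)|)$ (after recentering at $p$).

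I expect this to be the main obstacle, and I would handle it by a blowup/rescaling argument at $p$. Rescale the flow (equivalently the arrival time) by $\lambda_j^{-1}$ where $\lambda_j = |q_j - p| \to 0$; the rescaled regular points $\tilde q_j = (q_j - p)/\lambda_j$ lie on the unit sphere, and by passing to a subsequence $\tilde q_j \to q_\infty$ with $|q_\infty| = 1$. Since $p \in \cS_k \setminus \cS_{k-1}$, the tangent flow at $p$ is the shrinking cylinder $\RR^k \times \SS^{n-k}$, and the normals $\nn(q_j)$ converge to the normal of this cylinder at $q_\infty$ — which is orthogonal to the $\RR^k$ axis, so $\Pi_{\text{axis}}(\nn(q_j)) \to \Pi_{\text{axis}}(\nn_{\text{cyl}}(q_\infty)) = 0$. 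The delicate point is the convergence of normals under rescaling: this needs the regular points $q_j$ to be at definite (rescaled) distance from the singular set so that one has smooth convergence to the cylinder there, which forces $q_\infty$ off the axis; the $C^2$ hypothesis, via \eqref{e:1p2} applied along a sequence of critical points converging to $p$ (ruled out converging to the axis by the Hessian form and Lemma \ref{l:strata}), is what guarantees this and closes the argument. An alternative, cleaner route avoiding blowup entirely is to combine the two cases directly: write $q_j - p = \Pi(q_j-p) + \Pi_{\text{axis}}(q_j-p)$ and estimate $|\nn(q_j) - \nn(q_j')|$ where $q_j'$ is the nearest point to $q_j$ in the transversality cone, using that $u$ restricted to each slice $\{\Pi_{\text{axis}} = \text{const}\}$ near $p$ looks like the arrival time of a lower-dimensional shrinking sphere by the cylindrical structure of the singularity — but this still ultimately rests on Theorem \ref{t:C2trans} and the cylindrical blowup, so I would present the blowup version as the main argument.
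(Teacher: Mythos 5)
There is a genuine gap, and it sits exactly where you flag it: the case where $q_j$ approaches $p$ tangentially to the axis. Your Taylor-expansion argument only controls $\Pi_{\text{axis}}(\nn(q_j))$ when $|\nabla u(q_j)|\geq c\,|q_j-p|$, i.e.\ inside a transversality cone, and your proposed fix for the complementary case is a blowup at scale $\lambda_j=|q_j-p|$ with the claim that $\nn(q_j)$ converges to the cylinder's normal at $q_\infty$. But in the tangential case $q_\infty$ lies \emph{on} the $\RR^k$ axis, where the tangent cylinder has no points at all and where the locally smooth convergence of the rescaled flow gives no control of normals: the level set through $q_j$ is, at the parabolic scale $\sqrt{u(p)-u(q_j)}$, a thin tube around the axis whose radius is $o(\lambda_j)$, so at your chosen scale the picture degenerates rather than converging smoothly. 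Your sketch for closing this ("apply \eqref{e:1p2} along a sequence of critical points converging to $p$") presupposes that the singular set extends along the axis near $p$, which is precisely what Proposition \ref{p:key} proves \emph{later} by means of this lemma (possibility ($\star$) there is exactly a gap in $\cS$ along the axis); relying on it here would be circular. So as written the argument does not close.

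The paper's proof avoids all of this and needs neither a lower bound on $|\nabla u|$ nor any blowup. At the regular points $q_j$ the arrival time equation \eqref{e:arrivalu} can be written as $0=1+\Delta u(q_j)-\Hess_u(q_j)(\nn(q_j),\nn(q_j))$. Passing to a subsequence with $\nn(q_j)\to V\in\SS^n$ and using that $\Hess_u$ is continuous at $p$ (the $C^2$ hypothesis) together with \eqref{e:1p2}, the limit of the right-hand side is
\begin{align}
	1-\frac{n+1-k}{n-k}+\frac{1}{n-k}\,|\Pi(V)|^2=-\frac{1}{n-k}\,\left|\Pi_{\text{axis}}(V)\right|^2 \, ,
\end{align}
which forces $\Pi_{\text{axis}}(V)=0$ for every subsequential limit $V$, i.e.\ $\Pi_{\text{axis}}(\nn(q_j))\to 0$ with no case distinction. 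If you want to salvage your approach, you would have to supply a genuine argument in the tangential regime (for instance via the equation, at which point you have reproduced the paper's proof); as it stands, the transversal case you did prove is the easy half, and the hard half is asserted rather than proved.
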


\begin{proof}
We will argue by contradiction, so suppose instead that there is a sequence $q_j \to p$ with $|\Pi_{\text{axis}} (\nn (q_j))| \geq \delta > 0$.  Since $\SS^n$ is compact, we can pass to a subsequence so that
$\nn (q_j) \to V \in \SS^n$.  In particular, we must have
\begin{align}	\label{e:willcon}
	\left|\Pi_{\text{axis}} (V) \right| \geq \delta > 0 \, .
\end{align}
Using the arrival time equation \eqr{e:arrivalu} at the smooth points $q_j$ and then passing to limits since $u$ is $C^2$, we get that
\begin{align}
	0 =&\lim_{j \to \infty}  \, \left(  1 + \Delta u (q_j) -   \Hess_u (q_j) (\nn (q_j) , \nn (q_j) ) \right) = 1 + \Delta u (p) - \Hess_u (p) (V, V) \notag \\
	&= - \frac{1}{n-k} - \left[ - \frac{1}{n-k} \, \langle \Pi (V) , V \rangle \right] = - \frac{1}{n-k} \, \left|\Pi_{\text{axis}} (V) \right|^2 \, .
\end{align}
 This contradicts \eqr{e:willcon}, giving the lemma.
\end{proof}

The next lemma, which does not assume that $u$ is $C^2$, shows that a plane  orthogonal to the axis of a singularity contains a  point $q$ where $\Pi (\nabla u (q)) = 0$.  

\begin{Lem}	\label{l:Piz}
Suppose that   $\nabla u (0) = 0$ and $\Hess_u (0)$ has   kernel $K$. There exist $\epsilon > 0$ and $C$ so that if $p \in B_{\epsilon} \cap K$, then
there exists $q \in B_{C\, |p|} \cap \left(p+ K^{\perp} \right) $ with $\Pi (\nabla u (q)) = 0$.
\end{Lem}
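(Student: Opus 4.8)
The plan is to set up a degree-theoretic (or, equivalently, implicit-function-type) argument for the map $F := \Pi \circ \nabla u$ restricted to each affine slice $p + K^\perp$. Fix $p \in B_\epsilon \cap K$ and consider the restriction $F_p : (p + K^\perp) \cap B_{C|p|} \to K^\perp$ given by $q \mapsto \Pi(\nabla u(q))$. Since $\nabla u(0) = 0$ and $\Hess_u(0)$ has kernel $K$, the differential of $F_p$ at $q = p$ (computed in the $K^\perp$ directions) is, to leading order, $\Pi \circ \Hess_u(0)|_{K^\perp}$, which is an isomorphism of $K^\perp$ onto itself by definition of $K$. So near the origin $F_p$ looks like a small perturbation of an invertible linear map of $K^\perp$, and one expects a zero of $F_p$ close to $p$.

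First I would make the linearization quantitative: since $u \in C^{2}$ near $0$ (this lemma does not assume $C^2$ globally, but $u$ is twice differentiable everywhere with bounded Hessian by \cite{CM5}, which is what is actually needed — I would be careful to invoke only that), write $\nabla u(q) = \Hess_u(0)(q) + o(|q|)$ as $q \to 0$. Restricting to $q = p + w$ with $w \in K^\perp$ and applying $\Pi$, and using $\Pi(\Hess_u(0)(p)) = 0$ because $p \in K$ and $\Hess_u(0)$ preserves the splitting $K \oplus K^\perp$, one gets
\begin{align}
	F_p(p+w) = \Pi\bigl(\Hess_u(0)(w)\bigr) + o(|p| + |w|) \, .
\end{align}
Let $A := \Pi \circ \Hess_u(0)|_{K^\perp} : K^\perp \to K^\perp$, an invertible symmetric map, with $\|A^{-1}\| =: c_0$. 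Then I would run a fixed-point / continuity argument on the ball $\{ w \in K^\perp : |w| \le C|p| \}$ with $C$ chosen so that $C > 2 c_0$ and $\epsilon$ chosen so small that the $o(\cdot)$ error on $B_{(C+1)\epsilon}$ is bounded by $\tfrac{1}{2C}(|p| + |w|) \le \tfrac{1}{2C}\cdot (C+1)|p| < |p|$ (absorbing constants). The map $w \mapsto -A^{-1}\bigl(F_p(p+w) - A w\bigr) = -A^{-1}\bigl(o(|p|+|w|)\bigr)$ then sends the ball of radius $C|p|$ into itself; Brouwer's fixed point theorem (applicable since $F_p$ is continuous — $\nabla u$ is continuous because $u$ is $C^1$, indeed differentiable with bounded Hessian) produces $w_0$ with $F_p(p + w_0) = 0$, and $q := p + w_0 \in B_{C|p|} \cap (p + K^\perp)$ has $\Pi(\nabla u(q)) = 0$. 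One should also check $q \in B_\epsilon$-scale neighborhood where the estimates hold, which is automatic after shrinking $\epsilon$: $|q| \le |p| + C|p| = (1+C)|p| \le (1+C)\epsilon$, so I would just rename $\epsilon$ at the end.

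The main obstacle is the low regularity: $u$ need not be $C^2$, so $\nabla u$ need not be $C^1$ and one cannot directly apply the inverse function theorem or a contraction-mapping argument exploiting Lipschitz bounds on $F_p - $ (linear part). The fix is exactly the topological (Brouwer degree) argument above, which only needs continuity of $F_p$ plus the one-sided control $|F_p(p+w) - Aw| = o(|p| + |w|)$ coming from differentiability of $\nabla u$ at $0$ — i.e., from $u$ being twice differentiable at the single point $0$, which is guaranteed by \cite{CM5}. A secondary subtlety is uniformity of the $o(\cdot)$ in the direction $p/|p|$: since $\Hess_u(0)$ is a fixed linear map and differentiability at a point is a statement uniform over all directions of approach, the error estimate holds uniformly on small balls, so the same $\epsilon$ and $C$ work for every $p \in B_\epsilon \cap K$.
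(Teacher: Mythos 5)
Your argument is correct in outline but follows a genuinely different route from the paper. The paper never linearizes $\nabla u$: it uses the uniqueness of tangent flows from \cite{CM2} to see that, at the scale of $|p|$, a suitable sublevel set of $u$ meets the slice $p+K^{\perp}$ in an approximate $(n-k)$-sphere bounding a region on which $u$ exceeds its boundary values; taking the interior maximum of $u$ restricted to that slice produces the point $q$, and at an interior maximum the component of $\nabla u$ tangent to $p+K^{\perp}$, i.e.\ $\Pi(\nabla u (q))$, vanishes. That argument needs only the qualitative cylindrical structure of the level sets near the singularity (the same input reused later in Lemma \ref{l:sepa}), and no quantitative control on how $\nabla u$ deviates from its linearization. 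Your Brouwer/degree argument replaces this by the pointwise expansion $\nabla u(q) = \Hess_u(0)\,q + o(|q|)$ together with invertibility of $\Hess_u(0)$ on $K^{\perp}$ (which by \eqr{e:1p2} is just $-\tfrac{1}{n-k}$ times the identity), and you correctly recognize that the inverse function theorem is unavailable and that a topological fixed-point argument needs only continuity of $\nabla u$ plus differentiability of $\nabla u$ at the single point $0$; the bookkeeping (self-mapping of the ball of radius $C|p|$, $\Pi(\Hess_u(0)p)=0$ since $p\in K$) is fine.

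The one point you must nail down is the citation carrying the whole argument: the statement ``$u$ is twice differentiable at $0$'' in the sense of a second-order Taylor expansion of $u$ does \emph{not} by itself give $\nabla u(q) = \Hess_u(0)\,q + o(|q|)$, which is differentiability of the map $\nabla u$ at $0$ and is strictly stronger than a Peano-type expansion of $u$. So you need to check that \cite{CM5} actually provides the gradient form of the expansion near a critical point (its analysis of $\nabla u = \nn/H$ at the cylindrical scales, including the delicate near-axis region where the caps sit, is what produces such an estimate), and cite that specific statement rather than the headline ``twice differentiable'' theorem. With that input in hand your proof goes through; without it, this step is exactly where the argument would have a gap, and the paper's slice-maximum argument is how the authors avoid needing it.
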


\begin{proof}
By  the uniqueness of  \cite{CM2}, the flow is cylindrical at time $t = u(0) - \sqrt{\delta}$ in a ball $B_{C' \, \delta}(p)$ for every $\delta \in (0, \epsilon)$ for some $\epsilon > 0$ sufficiently small.   Here $C' $ is a large constant.{\footnote{We can make $C'$ as big as we want at the cost of decreasing $\epsilon$.}}
Thus, since $p \in B_{\epsilon} \cap K$,   the level set $\{ u = u(0) - \sqrt{|p|} \}$ is an approximate cylinder about $K$ in $B_{C'|p|}$. In particular, the intersection 
\begin{align}
	\{ u = u(0) - \sqrt{|p|} \} \cap  \left(p+ K^{\perp} \right)
\end{align}
is close to an $\SS^{n-k+1}$ and, furthermore, $u$ is strictly decreasing at each point of the intersection.  Let $q \in   \left(p+ K^{\perp} \right)$ be the point where $u$ achieves its maximum inside the subset of 
$\left(p+ K^{\perp} \right)$ bounded by $\{ u = u(0) - \sqrt{|p|} \} $.  It follows that $q$ is in the interior and, thus, $\nabla u(q)$ is orthogonal to $K^{\perp}$ as claimed.
 \end{proof}

 \subsection{Local lemma}

In this subsection, we assume that    $u$ is $C^2$.
The key  to Theorem \ref{t:mcvxRn}   is the following local proposition:

\begin{Pro}	\label{p:key}
Suppose that $\nabla u (0) = 0$ and $\Hess_u (0)$ has  kernel $K$.  Then there exists $\epsilon > 0$ so that
$B_{\epsilon} \cap \cS$ is the graph of a $C^1$ map
\begin{align}
	f: \Omega \subset K \to K^{\perp} \, ,
\end{align}
where $\Omega$ is a connected open subset of $K$ containing $0$.  Furthermore, $u$ is constant on $B_{\epsilon} \cap \cS$.
\end{Pro}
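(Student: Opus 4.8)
The plan is to build the graph by combining the quantitative information from \eqref{e:1p2}, the transversality estimate of Lemma~\ref{l:normals}, and the existence statement of Lemma~\ref{l:Piz}. Write $K$ for the kernel of $\Hess_u(0)$, so $\dim K = k$, and decompose $\RR^{n+1} = K \oplus K^{\perp}$ with projections $\Pi_{\text{axis}}$ and $\Pi$. Because $u$ is $C^2$, $\Hess_u$ is continuous, so for points $x$ near $0$ the form $\Hess_u(x)$ restricted to $K^{\perp}$ is close to $-\tfrac{1}{n-k}\,\mathrm{Id}_{K^{\perp}}$, hence uniformly negative definite; in particular $\Pi \circ \nabla u$, viewed as a map $\RR^{n+1} \to K^{\perp}$, has derivative in the $K^{\perp}$ directions close to $-\tfrac{1}{n-k}\,\mathrm{Id}$ and is therefore a diffeomorphism in those directions on each slice $p + K^{\perp}$. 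This lets me \emph{define} $f(p)$, for $p$ in a small ball $\Omega \subset K$ about $0$, to be the unique point $q \in p + K^{\perp}$ near $0$ with $\Pi(\nabla u(q)) = 0$; Lemma~\ref{l:Piz} guarantees such a $q$ exists in $B_{C|p|} \cap (p + K^{\perp})$, and the slicewise strict monotonicity of $\Pi \circ \nabla u$ gives uniqueness, so $f$ is well defined, $f(0) = 0$, and $f$ is $C^1$ by the implicit function theorem applied to $\Phi(p,y) := \Pi(\nabla u(p+y))$, which is $C^1$ with $\partial_y \Phi$ invertible.

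Next I must show that this graph $\{(p,f(p))\}$ is exactly $B_{\epsilon} \cap \cS$ for some $\epsilon > 0$, i.e.\ that the zero set of $\nabla u$ near $0$ coincides with the zero set of $\Pi \circ \nabla u$ near $0$. One inclusion is trivial: every singular point has $\nabla u = 0$, hence $\Pi(\nabla u) = 0$, so $B_\epsilon \cap \cS \subset \{\Phi = 0\} = \mathrm{graph}(f)$. For the reverse, I need that on the graph the remaining component $\Pi_{\text{axis}}(\nabla u)$ also vanishes. This is where Lemma~\ref{l:normals} enters: if $q_j = (p_j, f(p_j))$ were regular points (so $\nabla u(q_j) \neq 0$, i.e.\ $\nn(q_j)$ defined) accumulating at a singular point $p$ in the top-dimensional-enough strata, the lemma forces $\Pi_{\text{axis}}(\nn(q_j)) \to 0$; but on the graph $\Pi(\nabla u(q_j)) = 0$ means $\nabla u(q_j)$ is purely in the $K$-direction at $0$, so $\nn(q_j) = \nabla u(q_j)/|\nabla u(q_j)|$ is nearly a unit vector \emph{in} the axis direction, contradicting $\Pi_{\text{axis}}(\nn(q_j)) \to 0$ once we know that the kernel $K$ at $0$ agrees with the axis direction at nearby singular points — and this agreement itself follows from continuity of $\Hess_u$ together with \eqref{e:1p2}, since the kernel of $-\tfrac{1}{n-k}\Pi$ is exactly the axis $k$-plane, and by Lemma~\ref{l:strata} the strata are locally constant in dimension near $0$. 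Hence no point of the graph other than actual singularities can have $\nabla u$ nonzero in a neighborhood, giving $\mathrm{graph}(f) \cap B_\epsilon = \cS \cap B_\epsilon$; shrinking $\epsilon$ if needed keeps $\Omega$ connected.

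Finally, for the statement that $u$ is constant on $B_\epsilon \cap \cS$: along the graph, parametrize by $p \mapsto (p, f(p))$ and differentiate $u$, getting $\tfrac{d}{dp}\big[u(p,f(p))\big] = \Pi_{\text{axis}}(\nabla u) + Df^{T}\,\Pi(\nabla u)$. On $\cS$ both $\Pi_{\text{axis}}(\nabla u)$ and $\Pi(\nabla u)$ vanish (the former by the graph being singular points, the latter by construction), so the directional derivative of $u$ along $\cS$ is zero; since $\Omega$ is connected, $u$ is constant on $B_\epsilon \cap \cS$. The main obstacle I anticipate is the second paragraph: cleanly establishing that the axis $k$-plane at nearby singular points converges to $K$ — equivalently, that one cannot have singular points of the graph at which $\Hess_u$ has a differently-oriented or lower-dimensional kernel — and then running the Lemma~\ref{l:normals} contradiction to rule out regular points on the graph. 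This requires carefully using continuity of $\Hess_u$, \eqref{e:1p2}, and the local compactness of strata (Lemma~\ref{l:strata}) in tandem; the implicit-function-theorem construction of $f$ and the constancy of $u$ are comparatively routine once that point is secured.
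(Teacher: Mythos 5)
Your proposal is correct, and it reaches the conclusion by a partially different route than the paper. The paper does not build the graph itself: it quotes theorem 2.5 and corollary 4.5 of \cite{CM4} to get that $B_\delta\cap\cS$ is \emph{contained in} a $C^1$ graph over $K$, and then the whole work of the proof is to show $\cS$ fills the graph, by ruling out a slice $p_j+K^{\perp}$ missing $\cS$: Lemma~\ref{l:Piz} produces $q_j$ on that slice with $\Pi(\nabla u(q_j))=0$, which would be a regular point with $\nn(q_j)\in K$, contradicting Lemma~\ref{l:normals}. Your filling step is the same mechanism (points on the slices with $\Pi(\nabla u)=0$ whose normals would lie in $K$, against Lemma~\ref{l:normals}), but you manufacture the graph yourself by the implicit function theorem applied to $\Phi(p,y)=\Pi(\nabla u(p+y))$, using the standing $C^2$ hypothesis and the rigidity \eqref{e:1p2} to make $\partial_y\Phi(0,0)=-\tfrac{1}{n-k}\,\mathrm{Id}_{K^\perp}$ invertible; and for the constancy of $u$ you differentiate $p\mapsto u(p,f(p))$ and use $\nabla u=0$ on $\cS$, instead of citing part (B) of theorem 1.2 of \cite{CM4}. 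What each buys: your argument is self-contained under the $C^2$ assumption and exposes the graph structure as a soft consequence of continuity of $\Hess_u$ plus \eqref{e:1p2}, while the paper's reliance on \cite{CM4} uses machinery valid without any $C^2$ assumption (which is what the authors need elsewhere and emphasize in their footnotes). Two small points: the axis-matching discussion you flag as the main obstacle is avoidable, since the negation of the desired conclusion gives regular graph points $q_j\to 0$, so you can apply Lemma~\ref{l:normals} at $p=0$ itself, where the axis is literally $K$; and Lemma~\ref{l:normals} requires $k\geq 1$, but when $k=0$ the statement is immediate from the inverse function theorem (nondegenerate $\Hess_u(0)$ makes $0$ an isolated critical point), so your construction covers that case anyway.
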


  \begin{proof}
  It follows from theorem $2.5$ and corollary $4.5$ in \cite{CM4} that there is some $\delta > 0$ so that $B_{\delta}  \cap \cS$ is 
  {\underline{contained in}} the graph of a $C^1$ map{\footnote{The main theorem of \cite{CM4} states that the map $f$ below is Lipschitz.  However, the regularity of the distribution of $k$-planes implies that it is in fact $C^1$.}}
\begin{align}
	f: \Omega \subset K \to K^{\perp} \, .
\end{align}
Moreover, $B_{\delta}  \cap \cS$ is automatically a (relatively) closed subset of this graph.  To prove the the first part of the proposition, we   show that we can choose some $\epsilon \in (0 , \delta ]$ so $\cS$ fills out the entire graph in $B_{\epsilon}$.  To do this, we must rule out the following possibility:
\begin{itemize}
\item[($\star$)] There is a sequence $p_j \to 0$ of points $p_j \in K$ so that the plane $P_j$ through $p_j$ and parallel to $K^{\perp}$ misses 
$B_{\delta} \cap \cS$.
\end{itemize}
We will show that ($\star$) leads to a contradiction.  Namely, for each $j$, Lemma \ref{l:Piz} gives a point $q_j \in B_{C|p_j|} \cap P_j$ with
\begin{align}	\label{e:qjp}
	\Pi (\nabla u (q_j)) = 0 \, ,
\end{align}
where $\Pi $ is orthogonal projection onto $K^{\perp}$.  Since $\cS$ does not intersect $P_j$, we know that $\nabla u (	q_j) \ne 0$.  Therefore, \eqr{e:qjp} gives that
\begin{align}	\label{e:qjp2}
	\Pi  (\nn (q_j)) = 0 \, .
\end{align}
However, this contradicts Lemma \ref{l:normals} since $q_j \to 0$.  Thus, we get the desired $\epsilon > 0$.  This gives the first part of the proposition.

Next, we must show that this graph is contained in a level set of $u$.  This follows immediately from part (B) of theorem $1.2$ in  \cite{CM4} since any two points in the graph can be connected by a $C^1$ curve in $\cS$.

  \end{proof}
  
  \subsection{Local extinction after singularities}

   In the next lemma, $p \in \cS_k \setminus \cS_{k-1}$ is a singularity of the flow and $K^{\perp}_p$ is the
$n+1 - k$ dimensional plane through $p$ orthogonal to the axis of the singularity.

\begin{Lem}	\label{l:sepa}
There exists $\epsilon > 0$, depending only on $u$ and not on $p$, so that
\begin{itemize}
\item $B_{\epsilon} (p) \cap \{ u > u(p) \}$ does not intersect $K^{\perp}_p$.
\end{itemize}
\end{Lem}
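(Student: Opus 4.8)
The plan is to deduce everything from the Hessian formula \eqr{e:1p2} at $p$ by a second-order Taylor expansion, the point being that \eqr{e:1p2} says $u$ decreases quadratically away from $p$ in every direction tangent to $K^{\perp}_p$. Concretely, since $p \in \cS_k \setminus \cS_{k-1}$ is a critical point we have $\nabla u (p) = 0$, and \eqr{e:1p2} gives $\Hess_u(p) = -\frac{1}{n-k}\,\Pi$, where $\Pi$ is orthogonal projection onto the subspace parallel to $K^{\perp}_p$. Thus for any $v$ tangent to $K^{\perp}_p$ one has $\Hess_u(p)(v,v) = -\frac{1}{n-k}\,|v|^2 \le -\frac{1}{n}\,|v|^2$, where the last inequality uses only $1 \le n-k \le n$ and is therefore uniform over all strata. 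So, infinitesimally, $u$ has a strict local maximum at $p$ along $K^{\perp}_p$, with a rate of decrease that is bounded below independently of $p$.

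To make this into an honest neighborhood statement with $\epsilon$ independent of $p$, I would invoke that $u$ is $C^2$ on the compact domain bounded by the initial hypersurface (it suffices to work on a fixed compact neighborhood of $\cS$ contained in the interior), so $\Hess_u$ is uniformly continuous there. Fix $\epsilon > 0$, depending only on $u$, such that $|\Hess_u(x) - \Hess_u(y)| \le \frac{1}{2n}$ whenever $|x-y| \le \epsilon$. Now take $p \in \cS_k \setminus \cS_{k-1}$ and $x \in B_{\epsilon}(p) \cap K^{\perp}_p$ with $x \ne p$, and write $v = x - p$, a vector tangent to $K^{\perp}_p$ with $|v| < \epsilon$. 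Using $\nabla u(p) = 0$ and the integral form of Taylor's theorem,
\[
u(x) - u(p) = \int_0^1 (1-t)\, \Hess_u(p+tv)(v,v)\, dt \, .
\]
Since $|(p+tv) - p| = t|v| < \epsilon$ for $t \in [0,1]$, the integrand is at most $\Hess_u(p)(v,v) + \frac{1}{2n}|v|^2 \le -\frac{1}{2n}|v|^2$, so $u(x) - u(p) \le -\frac{1}{4n}|v|^2 < 0$. Hence $B_{\epsilon}(p) \cap K^{\perp}_p \subset \{ u \le u(p) \}$, which is precisely the assertion that $B_{\epsilon}(p) \cap \{ u > u(p) \}$ misses $K^{\perp}_p$.

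I do not expect a genuine difficulty, only two bookkeeping points that must be handled with care, both concerning uniformity. The first is that $\epsilon$ must not depend on $p$: this is exactly where the hypothesis that $u$ is $C^2$ — rather than merely twice differentiable at each point, as in \cite{CM5} — is used, since it supplies a single modulus of continuity for $\Hess_u$ on the relevant compact set. The second is that the coefficient $\frac{1}{n-k}$ in \eqr{e:1p2} varies with the stratum, so one needs the uniform lower bound $\frac{1}{n-k} \ge \frac{1}{n}$ valid for every $k \in \{0,\dots,n-1\}$; this is what lets the single threshold $\frac{1}{2n}$ work simultaneously at all singular points $p$. (The case $k=0$, where $K^{\perp}_p$ is all of $\RR^{n+1}$, is included and simply says $p$ is a strict local maximum of $u$.)
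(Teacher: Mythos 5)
Your argument is correct, but it is a genuinely different route from the paper's. The paper proves Lemma \ref{l:sepa} dynamically: by the uniqueness of blowups from \cite{CM2}, the level set $\{u = u(p)-\sqrt{\delta}\}$ is an approximate cylinder in a ball of definite size around $p$ (with the scale uniform in $p$ via theorem $3.1$ of \cite{CM4} and the compactness of the strata from Lemma \ref{l:strata}), so its intersection with $K^{\perp}_p$ is a sphere separating an inside containing $p$ from an outside the front has already swept through, and monotonicity of the flow forbids $\{u > u(p)\}$ from reaching that outside region. You instead use only the pointwise Hessian formula \eqr{e:1p2}, the uniform bound $\tfrac{1}{n-k} \geq \tfrac{1}{n}$, uniform continuity of $\Hess_u$ on a compact neighborhood of $\cS$, and Taylor's theorem with integral remainder. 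This is more elementary and even quantitatively stronger: you get $u(x) \leq u(p) - \tfrac{1}{4n}|x-p|^2$ on $B_{\epsilon}(p)\cap K^{\perp}_p$, which would, for instance, also rule out the alternative $\Pi_{\text{axis}}(q_i)=0$ directly in the proof of Corollary \ref{c:extinct}. The one point to be explicit about is that your proof leans on the standing hypothesis that $u$ is $C^2$ (a single modulus of continuity for $\Hess_u$), which the lemma's statement does not announce; this is nevertheless consistent with the paper, since its own proof already invokes Lemma \ref{l:strata}, whose hypothesis is that $u$ is $C^2$, and the lemma is only ever applied in that setting. By contrast, the paper's argument uses the $C^2$ assumption only through the compactness of the strata to make $\epsilon$ independent of $p$, with the local structure supplied by the blowup analysis rather than by continuity of the Hessian.
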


\begin{proof}
By  the uniqueness of  \cite{CM2}, the flow is cylindrical at time $t = u(p) - \sqrt{\delta}$ in a ball $B_{C \, \delta}(p)$ for every $\delta \in (0, \epsilon)$ for some $\epsilon > 0$ sufficiently small.  Here $\epsilon > 0$ depends only on the cylindrical scale and, thus, is uniform in $p$ by
  theorem $3.1$ in \cite{CM4}  because  each strata is  compact by Lemma \ref{l:strata}.

 The intersection of the level set 
$u = u(p) - \sqrt{\delta}$ with $K^{\perp}_p$  is an $(n-k)$ sphere that separates $K^{\perp}_p$ (at least in the ball $B_{\epsilon} (p)$) into an inside containing $p$ and an outside where the flow has recently gone through.  Because the flow is monotone, it can never return to this outside region.  By assumption, these inside regions shrink to $p$ as $\delta \to 0$.
\end{proof}

The next corollary shows that if a critical time can be approached by future regular times, then each critical point at this time is a local maximum.  

 \begin{Cor}	\label{c:extinct}
Suppose that $u$ is $C^2$, $\nabla u (0) = 0$, and there exist $t_i > u(0)$ with $t_i \to u(0)$ and $\nabla u \ne 0 $ on $\{ u = t_i \}$.  Then there exists $\delta > 0$ so that 
\begin{align}
	\sup_{B_{\delta}} u = u (0) \, .
\end{align}
 \end{Cor}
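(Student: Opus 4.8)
The plan is to use the stratification together with Lemma \ref{l:sepa} to localize the analysis at $0$. Since $\nabla u(0) = 0$, the point $0$ is a critical point, so $0 \in \cS_k \setminus \cS_{k-1}$ for some $k$. Write $K = K_0$ for the kernel of $\Hess_u(0)$, which by \eqr{e:1p2} is the $k$-plane tangent to the axis, and write $K^{\perp} = K^{\perp}_0$ for the orthogonal complement. By Lemma \ref{l:sepa}, there is $\epsilon > 0$ (uniform in the singular point, in particular valid at $0$) so that $B_{\epsilon} \cap \{ u > u(0) \}$ does not meet the plane $0 + K^{\perp} = K^{\perp}$. So the superlevel region sits to one side of $K^{\perp}$ near $0$; the remaining task is to rule out points of $\{ u > u(0) \}$ that lie off that plane, i.e.\ points with nonzero $K$-component.

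First I would handle the case $k \geq 1$ using Theorem \ref{t:C2trans} in the complementary regime. The idea is that a point $x \in B_{\delta}$ with $u(x) > u(0)$ cannot be "close to $K^{\perp}$'' (Lemma \ref{l:sepa} already excludes $x \in K^{\perp}$, and a quantitative version should exclude a cone around it), so such $x$ has $|\Pi_{\text{axis}}(x)|$ comparable to or larger than $|\Pi(x)|$ — it points roughly along the axis. Along the axis direction, though, the hypothesis gives regular level sets $\{u = t_i\}$ accumulating at time $u(0)$, and the flow is smooth near $0$ in the axis directions; more precisely, Lemma \ref{l:Piz} and the cylindrical structure from \cite{CM2} used in its proof show that on each plane $p + K^{\perp}$ with $p \in K$ small, $u$ achieves its max near $p$ and is strictly decreasing outward, while the maximum over $p \in B_{\delta} \cap K$ of that max is attained — and I claim it is attained at $p = 0$ with value $u(0)$. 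If instead $\sup_{B_\delta} u > u(0)$, we would get a point $q$ with $u(q) > u(0)$ and $\Pi(\nabla u(q)) = 0$ (by the same interior-maximum argument as in Lemma \ref{l:Piz}, applied on the plane through $q$), hence $\Pi(\nn(q)) = 0$ if $q$ is regular; pushing $q \to 0$ along such a sequence then contradicts Lemma \ref{l:normals}. The hypothesis that $\{u = t_i\}$ is regular for $t_i \downarrow u(0)$ is exactly what guarantees these competitor points $q$ are regular points, so Lemma \ref{l:normals} applies.

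For $k = 0$ the statement is nearly immediate: $\Hess_u(0) = -\frac{1}{n} I$ is negative definite, so $0$ is a strict local max of the $C^2$ function $u$, giving $\sup_{B_\delta} u = u(0)$ directly from the second-order Taylor expansion. (Alternatively this is the degenerate case $K^{\perp} = \RR^{n+1}$ of Lemma \ref{l:sepa}.)

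I expect the main obstacle to be the case $k \geq 1$: making rigorous the dichotomy "either $x$ is in a narrow cone around $K^{\perp}$, where Lemma \ref{l:sepa} (suitably strengthened) kills it, or $x$ has a definite axis component, where Theorem \ref{t:C2trans} gives $C^2$ regularity and one extracts a competitor critical point contradicting Lemma \ref{l:normals}.'' The subtlety is that Lemma \ref{l:sepa} as stated only excludes the plane $K^{\perp}_p$ itself, not a neighborhood, so one needs either a quantitative refinement of its proof (using that the separating $(n-k)$-spheres $\{u = u(0)-\sqrt{\delta}\} \cap K^{\perp}$ shrink to $0$ at a controlled rate, so that the region $\{u > u(0)\}$ near $0$ is genuinely contained in a thin slab) or, more cleanly, to run the Lemma \ref{l:Piz}/Lemma \ref{l:normals} argument on every plane $p + K^{\perp}$ simultaneously and uniformly — which is what ties the regularity hypothesis on the $t_i$ to the contradiction. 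Once that uniformity is in place, the conclusion $\sup_{B_\delta} u = u(0)$ drops out.
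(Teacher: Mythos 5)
Your overall strategy is the paper's: argue by contradiction, use Lemma \ref{l:sepa} to keep the superlevel set off $K^{\perp}$, produce nearby regular points whose unit normal has no $K^{\perp}$-component, and contradict Lemma \ref{l:normals}. (Your $k=0$ observation is fine, and Theorem \ref{t:C2trans} is not actually needed.) But the mechanism you propose for producing those points has a genuine gap. You take an interior maximum of $u$ on a plane $p+K^{\perp}$, as in Lemma \ref{l:Piz}; this yields $\Pi(\nabla u(q))=0$, but gives no control whatsoever that $q$ is a \emph{regular} point -- the maximum could perfectly well be attained at another critical point of $u$ with value $>u(0)$ (the corollary's hypotheses do not exclude later singularities near $0$). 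Your claim that the hypothesis ``$\nabla u\ne 0$ on $\{u=t_i\}$'' guarantees regularity of these competitors is not correct: the maximal value of $u$ on the plane need not equal any $t_i$, so the hypothesis says nothing about that point. Without regularity, $\nn(q)$ is undefined and Lemma \ref{l:normals} cannot be invoked, so the contradiction never materializes. You also leave the quantitative ``thin cone around $K^{\perp}$'' dichotomy, and the fact that your competitor points actually converge to $0$, as admitted open issues.

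The paper closes exactly this gap by flipping the variational problem: instead of maximizing $u$ on planes parallel to $K^{\perp}$, it fixes the \emph{regular} level set $\{u=t_i\}$ and minimizes $h(x)=\left|\Pi_{\text{axis}}(x)\right|^2$ over $\{u=t_i\}\cap B_{\epsilon}$. The segment from $0$ to a bad point $p_i$ (with $u(p_i)>t_i>u(0)$) meets $\{u=t_i\}$, so the minimum is at most $|p_i|^2$; the minimizer $q_i$ is automatically a regular point, and the first-order condition forces $\Pi_{\text{axis}}(q_i)$ to be parallel to $\nn(q_i)$, i.e.\ either $\Pi_{\text{axis}}(q_i)=0$ (ruled out by Lemma \ref{l:sepa}, since $u(q_i)>u(0)$) or $\Pi(\nn(q_i))=0$. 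That $q_i\to 0$ uses theorem $3.1$ of \cite{CM4} (the support of the flow for $u>u(0)$ near $0$ is close to $K$), a point your sketch glosses over. Then Lemma \ref{l:normals} gives $\Pi_{\text{axis}}(\nn(q_i))\to 0$, contradicting $|\Pi_{\text{axis}}(\nn(q_i))|=1$. If you want to salvage your plane-by-plane maximum idea, you would have to restrict attention to the levels $t_i$ themselves in some form; as written, the regularity of the competitor points is the missing step.
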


\begin{proof}
Let $\epsilon > 0$ be from Lemma \ref{l:sepa}.    
We will argue by contradiction, so suppose instead
that there is a sequence $p_j \to 0$ with $u(p_j) > u(0)$.  By continuity of $u$, $u(p_j) \to u(0)$.  Thus, after passing to   subsequences for the $p_j$'s and $t_j$'s, we can assume that 
\begin{align}
	u(p_1) > t_1 > u(p_2) > t_2 > \dots \to u(0) \, .
\end{align}

 Suppose that $i$ is large so that $|p_i| < \epsilon$.  Since $u$ is continuous and  $u(p_i)  > t_i > u(0) $, the  line segment from $0$ to $p_i$ intersects $\{ u= t_i \}$.  Thus, we can
choose   $q_i$ with
\begin{align}	\label{e:closew}
	 \left| \Pi_{\text{axis}} (q_i) \right|^2 = \min \left\{ \left| \Pi_{\text{axis}} (q) \right|^2 \, | \, q \in B_{\epsilon} {\text{ and }} u(q) = t_i \right\} \leq |p_i|^2  \, .
\end{align}
This has two consequences:
\begin{align}
	 \left|  q_i \right|^2 & \to 0 \, , \label{e:qit0} \\
 	\Pi (\nn (q_i)) &= 0 \, . \label{e:nablaud}
\end{align}
To prove \eqr{e:qit0}, use \eqr{e:closew} to get that
  $ \left| \Pi_{\text{axis}} (q_i) \right|^2 \to 0$  and then use that 
  the support of the flow for $u> u(0)$ must be close to $K$ near $0$ (by theorem $3.1$ in \cite{CM4}).  
  
  To see \eqr{e:nablaud}, let $h: u^{-1} (t_i) \to \RR$ be given by $h(x) = \left|   \Pi_{\text{axis}} (x) \right|^2$, so that
  \begin{align}	
  	\frac{1}{2} \, \nabla_x h =        \Pi_{\text{axis}} (x)-   \langle     \Pi_{\text{axis}} (x)  , \nn (x) \rangle \, \nn (x)  \,  .
\end{align}
Since $q_i$ is a minimum of $h$, we get that $\nabla_{q_i} h = 0$ and, therefore,
  \begin{align}
  	 \Pi_{\text{axis}} (q_i) =    \langle     \Pi_{\text{axis}} (q_i)  , \nn (q_i) \rangle \, \nn (q_i) \, .
\end{align}
It follows that $ \Pi_{\text{axis}} (q_i) = \pm \, \left|  \Pi_{\text{axis}} (q_i) \right| \, \nn (q_i)$.  This implies that  
\begin{align}
	 \Pi_{\text{axis}} (q_i) = 0 {\text{ or }}  \Pi  (\nn (q_i)) = 0 \, .
\end{align}  	
Lemma \ref{l:sepa} rules out the first possibility, so we get  \eqr{e:nablaud}.
  
On the other hand, \eqr{e:qit0} allows us to apply 
 Lemma \ref{l:normals} to get that
 \begin{align}	\label{e:piax}
 	\Pi_{\text{axis}} (\nn (q_i)) \to 0 \, .
 \end{align}
This  contradicts \eqr{e:nablaud}, completing the proof. 
\end{proof}

\subsection{Proofs of the main results}

We will prove one direction of Theorem \ref{t:mcvxRn} in the following proposition.

\begin{Pro}	\label{p:mcvxRn}
 If $u$ is $C^2$, then
  \begin{enumerate}
  \item There is exactly one singular time $T$ (where the flow becomes extinct).
  \item  The singular set $\cS$ is  
a $k$-dimensional closed connected embedded $C^1$ submanifold of  singularities where the blowup is a cylinder $\SS^{n-k} \times \RR^k$ at each point.  
\end{enumerate}
Moreover, $\cS$ is tangent to the $\RR^k$ factor in (2).
\end{Pro}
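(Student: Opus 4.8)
The plan is to upgrade the local statements already proved into a global picture of $\cS$, then use that picture to force a single singular time, and finally read off (2) and the ``moreover''. First I would pin down the structure of $\cS$. By Lemma \ref{l:strata} every stratum $\cS_k\setminus\cS_{k-1}$ is compact, hence closed; so each $\cS_{k-1}$ and each $\cS\setminus\cS_k$ is closed, and therefore every stratum is also open in $\cS$, i.e.\ clopen. Near any point of $\cS_k\setminus\cS_{k-1}$, Proposition \ref{p:key} writes $\cS$ as a connected $k$-dimensional $C^1$ graph over the kernel $K$ of $\Hess_u$, tangent to $K$, on which $u$ is constant. Combining these two facts, $\cS$ is a finite disjoint union of closed connected embedded $C^1$ submanifolds, each contained in a single stratum — so the $i$-th component is $k_i$-dimensional, with blowup $\SS^{n-k_i}\times\RR^{k_i}$ and tangent to the $\RR^{k_i}$-axis — and $u$ is constant on each component; in particular $u(\cS)$ is a \emph{finite} set. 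This already yields the local form of (2); the point is to show there is only one component and one value.

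Next I would prove (1). Let $T=\max u$; since $\{u=T\}$ consists of interior critical points of $u$, it lies in $\cS$, so $T$ is a singular time, namely the extinction time. Suppose $u(\cS)\ne\{T\}$, and let $t_0=\min u(\cS)<T$ be the first singular time, with $\cS^{\ast}:=\cS\cap\{u=t_0\}$ a union of connected components of $\cS$; thus $\cS^{\ast}$ is a compact $C^1$ submanifold of $\RR^{n+1}$ of codimension at least $2$, since each of its components has dimension $k_i\le n-1$. As $u(\cS)$ is finite, $(t_0,t_0+\eta)$ consists of regular values for small $\eta>0$, so Corollary \ref{c:extinct} applies at every point of $\cS^{\ast}$; by compactness there is $\delta>0$ with $u\le t_0$ on $B_\delta(\cS^{\ast})$ and $\{u=t_0\}\cap B_\delta(\cS^{\ast})=\cS^{\ast}$. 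Hence $u<t_0$ on the compact sphere-bundle $\partial B_{\delta'}(\cS^{\ast})$ for any $\delta'<\delta$, so $M_t:=\{u=t\}$ misses it for $t$ close to $t_0$; for such $t<t_0$ the flow is still smooth, so $M_t$ is connected. Therefore $M_t\cap B_{\delta'}(\cS^{\ast})$ is clopen in $M_t$ and nonempty (the flow reaches points near $\cS^{\ast}$ carrying $u$-values just below $t_0$), whence $M_t\subset B_{\delta'}(\cS^{\ast})$. Letting $t\uparrow t_0$: every point of $\{u=t_0\}$ is either in $\cS^{\ast}$ or a regular point that is a limit of such $M_t$, so $\{u=t_0\}\subset\overline{B_{\delta'}(\cS^{\ast})}\subset B_\delta(\cS^{\ast})$ and thus $\{u=t_0\}=\cS^{\ast}$. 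But then the compact codimension-$\ge 2$ submanifold $\cS^{\ast}$ separates the domain $\Omega_0$ bounded by the initial hypersurface into the nonempty open sets $\{u<t_0\}$ and $\{u>t_0\}$ (the latter nonempty since $t_0<T$), which is impossible. Hence $u(\cS)=\{T\}$: the only singular time is the extinction time, which is (1).

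Finally, by (1) we have $\cS=\{u=T\}$, a compact set that by Proposition \ref{p:key} is locally a $C^1$ graph over the axis, hence an embedded $C^1$ submanifold tangent to the $\RR^k$-factor. If $\cS$ had two distinct components, pick disjoint tubular neighborhoods of them and rerun the previous paragraph's argument at time $T$ (the flow is smooth, so $M_t$ is connected, for all $t<T$): for $t$ close to $T$ the connected set $M_t$ would have to lie inside each of the two disjoint neighborhoods, forcing $M_t=\varnothing$, a contradiction. So $\cS$ is connected, hence of a single dimension $k$, with blowup $\SS^{n-k}\times\RR^k$ at each point; this gives (2) and the ``moreover''. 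I expect the main obstacle to be the identity $\{u=t_0\}=\cS^{\ast}$ in Step 2 — that at the first singular time the level set is \emph{exactly} the low-dimensional singular submanifold, with no regular part. This is precisely where $C^2$-regularity is used: through Corollary \ref{c:extinct}, which keeps the flow from re-entering a neighborhood of $\cS^{\ast}$, together with connectedness of the earlier level sets. The dumbbell, which pinches off at one end while surviving at the other and is not $C^2$, is exactly the configuration this step must — and does — exclude; once the identity holds, the codimension/separation argument closes the proof at once.
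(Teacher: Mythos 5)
Your proposal is correct, and its skeleton is the same as the paper's: both first combine Lemma \ref{l:strata} with Proposition \ref{p:key} to write $\cS$ as a finite disjoint union of closed connected embedded $C^1$ submanifolds, each inside a single stratum and each contained in a level set of $u$, then use Corollary \ref{c:extinct} plus compactness to get a tubular neighborhood of the first-time singular set on which $u$ does not exceed that singular value, and finally exploit connectedness of the (smooth) level sets before the first singular time. Where you genuinely diverge is the endgame. The paper argues that if the first singular time $T$ were not the extinction time, the connected level sets $\{u=t\}$, $t<T$, would have to cross $\partial T_{\delta/2}(\cS_T)$, and a limit of such crossing points is a point of the tube boundary where $u=T$, hence an interior local maximum, hence critical --- contradicting its positive distance from $\cS_T$; the same trick then gives connectedness of $\cS$. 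You instead trap $M_t$ entirely inside the tube, conclude the stronger intermediate statement $\{u=t_0\}=\cS^{\ast}$, and close with the topological fact that a compact codimension-$\geq 2$ $C^1$ submanifold cannot disconnect the connected domain bounded by the initial hypersurface; your two-tube argument for connectedness of $\cS$ is likewise a valid variant of the paper's. Two small points you should make explicit: the identity $\{u=t_0\}\cap B_\delta(\cS^{\ast})=\cS^{\ast}$ does not come directly from Corollary \ref{c:extinct} but needs the one-line observation (which is exactly the paper's trick) that any point of the open tube with $u=t_0$ is an interior local maximum, hence critical, hence in $\cS^{\ast}$; and the non-separation property of compact codimension-two submanifolds, while standard, is an extra topological ingredient the paper's route avoids invoking explicitly. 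With those remarks supplied, your argument is complete; it buys a cleaner geometric picture (at a hypothetical non-extinction singular time the whole level set would have to coincide with the low-dimensional singular set), at the cost of that extra topological input, whereas the paper's version stays entirely within the local maximum/criticality dichotomy.
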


  \begin{proof} 
  Fix a point $p \in \cS$.  Let $k$ be the dimension of the kernel of $\Hess_u (p)$, so $p$ is cylindrical of type $\SS^{n-k} \times \RR^k$.  
   Let $\cS_p$ be the component of $\cS$ containing $p$; note that each point in $\cS_p$ must also be cylindrical of type $\SS^{n-k} \times \RR^k$ by
  Lemma  \ref{l:strata}.   Given $q \in \cS_p$, let $K^{\perp}_q$ be the $k$-dimensional kernel of $\Hess_u (q)$.
  
  Proposition \ref{p:key} implies that each point $q$ in $\cS_p$ has an $\epsilon_q > 0$ so that
  \begin{itemize}
  	  \item $B_{\epsilon_q}(q) \cap \cS$  is given as a $C^1$ graph over $K^{\perp}_q$.
	  \item $u$ is constant on this graph.
\end{itemize}
Since $\cS_p$ is compact and connected, it follows that $\cS_p$ is a closed connected embedded $C^1$ $k$-dimensional submanifold and $u \equiv u(p)$ on $\cS_p$. 

Since $\cS$ is compact, we conclude that $\cS$ is given as a finite collection of disjoint embedded  $C^1$   closed submanifolds 
\begin{align}
	\cS = \cup_{j=1}^N \cS_{p_j} {\text{ with }} u (\cS_{p_j}) \equiv u (p_j) \, .
\end{align}
  
  Let $T$ be the first singular time.  In the remainder of the proof, we will show that
  \begin{enumerate}
  \item[(A)] $T$ is also the extinction time and, thus,  the only singular time.
  \item[(B)] $\cS$ has only one component. 
 \end{enumerate}
   Let $\cS_T = \cS \cap \{ u = T \}$ be the union of the $\cS_{p_j}$'s where $u(p_j) = T$.  Note that $\cS_T$ is compact and there exists $\kappa > 0$ so that 
  \begin{align}
  	\cS \cap \{ T < u < T+ \kappa \}  = \emptyset 
\end{align}
since there are only finitely many singular times.  Thus, Corollary \ref{c:extinct} gives $\delta > 0$ so that
\begin{align}	\label{e:loce}
	\sup_{ T_{\delta} (\cS_T)} \, u = T \, , 
\end{align}
where $T_{\delta} (\cS_T)$ is the $\delta$-tubular neighborhood of $\cS_T$.     

We can now prove (A) by contradiction. Namely, if (A) does not hold, then \eqr{e:loce} and the monotonicity of the flow imply that $\{ u = t \}$ intersects both inside and outside of $T_{\delta/2} (\cS_T)$ for $t < T$.
   Since the initial hypersurface is connected and the flow is smooth before $u=T$, we know that $\{  u = t \}$ is connected for each $t< T$.  Thus, we get a sequence of points $z_j \in \partial T_{\delta/2} (\cS_T)$ 
   with $u(z_j ) < T$ and $u(z_j) \to T$.  By compactness, a subseqence of the $z_j$'s converges to $z \in \partial T_{\delta/2} (\cS_T)$.  Continuity of $u$ implies that $u(z) = T$ and, thus, \eqr{e:loce} implies that 
   $z$ is a local maximum for $u$ and $\nabla u (z) = 0$.  This contradicts that $z \in \partial T_{\delta/2} (\cS_T)$ is not a critical point, giving (A).
   
   Now that we know that every point in $\{ u = T \}$ is a critical point, the same argument that we used for (A) implies that $\cS = \cS_T$ is connected.  This gives (B), completing the proof.
     \end{proof}

\section{The arrival time is $C^2$ away from the axis}

Throughout this section, $u$ will be the arrival time for a mean convex flow in $\RR^{n+1}$ starting from a smooth closed mean convex hypersurface.  By \cite{CM5}, $u$ is twice differentiable everywhere with bounded $\Hess_u$ and is smooth away from the singular set where $\nabla u = 0$.

\begin{proof}[Proof of Theorem \ref{t:C2trans}]
It follows from \cite{CM4} that the region in \eqr{e:region} intersects the singular set only at $0$ for $\delta > 0$ small enough.  Thus, by \cite{CM5}, we need only show that any sequence $q_j \to 0$ in \eqr{e:region} must have $\Hess_u (q_j) \to \Hess_u (0)$.  Furthermore, by lemma $2.11$ of \cite{CM5}, $u(x) \leq u (0)$ in the region \eqr{e:region} with equality only for $x=0$.

If $e_1 ,\dots e_n$ is an orthonormal frame for the level sets of $u$, then 
\begin{align}
	\Hess_u (e_i , e_j) &= \frac{A(e_i , e_j)}{H} \, , \\
	\Hess_u (\nn , \nn) &= \nabla_{\nn}|\nabla u| = - \frac{\partial_t H}{H^3} = - \frac{(\Delta + |A|^2) H}{H^3} \, , \\
	\Hess_u (e_i , \nn) &= \nabla_{e_i} |\nabla u| = - \frac{H_i}{H^2} \, .
\end{align}

In the region \eqr{e:region}, the uniqueness of \cite{CM2} gives that the rescaled level set flow converges to  cylinders with axis $K$.  If we let $\rho$ denotes the distance to $K$, then 
\begin{align}
	\frac{\nabla u}{|\nabla u|} \to \partial_{\rho} {\text{ and }}
	\frac{1}{H \rho} = \frac{ |\nabla u|}{\rho} \to 1 \, , \\
	-\frac{A}{H} \to  \frac{1}{n-k}  \, \Pi  {\text{ restricted to the tangent space}}, \\
	\frac{ \left| \nabla \frac{A}{H} \right|}{H}   \, ,  
	\frac{ | \nabla H |}{H^2} {\text{ and }} 
	\frac{\left| \Delta H \right|}{H^3} \to 0 \, .
\end{align}
The first three claims are immediate from the uniqueness of the blow up.  The last three claims follow from the smooth convergence of the rescaled level sets to the cylinder (where each of these quantities is zero); the powers of $H$ are the appropriate scaling factors.

Combining these facts shows that $\Hess_u$ is continuous in this conical region.

\end{proof}

  \begin{proof}[Proof of Theorem \ref{t:mcvxRn}]
  One direction is given by Proposition   \ref{p:mcvxRn}.  We will suppose therefore that (1) and (2) hold and show that $u$ must be $C^2$.  By \cite{CM5}, $u$ is twice differentiable everywhere and smooth away from the singular set $\cS$.  Thus, we must show that $\Hess_u$ is continuous at each point of $\cS$.
  
Using the   form of the Hessian, it follows that  if $p , \tilde{p} \in \cS$, then 
\begin{align}	\label{e:huclo}
	\left| \Hess_u (p) - \Hess_u (\tilde{p}) \right| \leq C \, \dist (T_p \cS , T_{\tilde{p}} \cS ) \, .
\end{align}
Fix a point $p \in \cS$ and let $q_j \to p$ be any sequence.  We must show that $\Hess_u (q_j) \to \Hess_u (p)$.  For each $j$, let $p_j$ be a closest point in $\cS$ to $q_j$.  It follows that
\begin{itemize}
\item $|p_j - q_j| \leq |p- q_j| \to 0$.
\item $\langle (p_j - q_j ) , T_{p_j} \cS) \rangle = 0$.
\end{itemize}
The second property allows us to apply Theorem \ref{t:C2trans} to get that
\begin{align}
	\left| \Hess_u (p_j) - \Hess_u (q_j) \right| \to 0 \, .
\end{align}
Finally, since $\cS$ is $C^1$ and $p_j \to p$, \eqr{e:huclo} implies that 
\begin{align}
	\left| \Hess_u (p_j) - \Hess_u (p) \right| \to 0 \, .
\end{align}

  \end{proof}

\end{document}